\renewcommand{\geq}{\geqslant}
\renewcommand{\leq}{\leqslant}
\newcommand{\sep}{\operatorname{sep}}
\newcommand{\R}{\mathbb{R}}
\newcommand{\Z}{\mathbb{Z}}
\newcommand{\C}{\mathbb{C}}
\renewcommand{\epsilon}{\varepsilon}
\newtheorem{theorem}{Theorem}[section]
\newtheorem{lemma}[theorem]{Lemma}
\newtheorem{proposition}[theorem]{Proposition}
\newtheorem{conjecture}[theorem]{Conjecture}
\theoremstyle{remark}
\newtheorem{definition}{Definition}
\title{Upper Bounds on Polynomial Root Separation}
\author{Greg Knapp}
\address{Department of Mathematics and Statistics\\University of Calgary\\Calgary, AB T2N 1N4\\ Canada\\\orcidlink{0000-0001-6808-5176}}
\email{greg.knapp@ucalgary.ca}
\author{Chi Hoi Yip}
\address{School of Mathematics\\ Georgia Institute of Technology\\Atlanta, GA 30332\\ United States\\\orcidlink{0000-0003-0753-1675}}
\email{cyip30@gatech.edu}
\keywords{polynomial, Mahler measure, root separation}
\subjclass[2020]{12D10, 11R06, 11H99, 30C15}
\begin{document}

\begin{abstract}
In this paper, we consider the relationship between the Mahler measure of a polynomial and its separation.  In 1964, Mahler proved that if $f(x) \in \mathbb{Z}[x]$ is separable of degree $n$, then $\operatorname{sep}(f) \gg_n M(f)^{-(n-1)}$.  This spurred further investigations into the implicit constant involved in that relation, and it led to questions about the optimal exponent on $M(f)$ in that relation.  However, there has been relatively little study concerning upper bounds on $\operatorname{sep}(f)$ in terms of $M(f)$.  In this paper, we prove that if $f(x) \in \mathbb{C}[x]$ has degree $n$, then $\operatorname{sep}(f) \ll n^{-1/2}M(f)^{1/(n-1)}$. Moreover, this bound is sharp up to the implied constant factor.  We further investigate the constant factor under various additional assumptions on $f(x)$, for example, if it only has real roots.
\end{abstract}

\maketitle

\section{Introduction}

A well-studied subject in number theory is the distribution of roots of polynomials.  This tradition includes, for example, Descartes' Rule of Signs, the Gauss-Lucas Theorem, and the Schinzel-Zassenhaus Conjecture.  Here, we study the relationship between the separation of a polynomial (the minimal distance between its roots) and the Mahler measure of that same polynomial.  To be precise, we define these terms here.

\begin{definition}\label{def:separation}
	Given a polynomial $f(x) \in \C[x]$ with roots $\alpha_1,\dots,\alpha_n \in \C$, the \underline{separation} of $f(x)$ is the quantity \[\sep(f) := \min_{\alpha_i \neq \alpha_j} |\alpha_i - \alpha_j|.\]
\end{definition}

Since, for any nonzero $b \in \C$, $\sep(f) = \sep(bf)$, we assume in the remainder of the paper that all polynomials are monic.  Our results can be easily adapted otherwise.

\begin{definition}\label{def:mahlerMeasure}
	Given a monic polynomial $f(x) \in \C[x]$ with roots $\alpha_1,\dots,\alpha_n \in \C$, the \underline{Mahler measure} of $f(x)$ is the quantity \[M(f) :=\prod_{i=1}^n \max\{1,|\alpha_i|\}.\]
\end{definition}

These two quantities are useful for rather different reasons.  Knowledge about root separation is helpful for writing algorithms that compute the number of real roots of a polynomial (see Koiran's work in \cite{Koiran2019}), for bounding the number of solutions to Thue equations (see Grundman and Wisniewski's paper \cite{Grundman2013}), and for deriving lower bounds on the absolute values of certain products of algebraic integers (see Albayrak, Ghosh, Nguyen, and the first author's upcoming paper \cite{Albayrak2024}).

Meanwhile, Mahler measure is a versatile tool for measuring the complexity of a polynomial. On the one hand, Mahler measure contains information about the roots of a polynomial via its definition.  On the other hand, Mahler measure contains information about the coefficients of the polynomial: for any polynomial $f(x) \in \C[x]$ of degree $n$, $M(f) \asymp_n H(f)$ where $H(f)$ is the maximum absolute value of its coefficients \cite[Lemma 1.6.7]{BombieriEnrico2001HiDG}.  Additionally, Mahler measure preserves algebraic information about polynomials since it is multiplicative.  These facts together make Mahler measure valuable for translating information about the roots of a polynomial into information about its coefficients, and vice versa.  Mahler measure is the main object in Lehmer's Conjecture\footnote{See \cite{Lehmer1933} for Lehmer's statement of the problem, though he does not actually conjecture an answer.}, and the authors invite the reader to investigate Smyth's excellent survey \cite{Smyth2008} for information on the use and study of the Mahler measure.

In general, it is valuable to find effective lower bounds on polynomial root separation.  After all, separation gives the minimum distance between distinct roots of a polynomial, so a lower bound on separation gives a lower bound on the distance between any two roots of that polynomial.  Indeed, this is how separation is used in \cite{Albayrak2024,Grundman2013,Koiran2019}.  A foundational lower bound on separation is the following result of Mahler \cite{Mahler1964}.  It relies on the discriminant, defined for monic $f(x) \in \C[x]$ with roots $\alpha_1,\dots,\alpha_n$ to be \[D(f) := \prod_{1 \leq i < j \leq n} (\alpha_i - \alpha_j)^2.\]

\begin{theorem}\label{thm:mahlerLowerBound}
 Let $f(x) \in \C[x]$ be separable of degree $n \geq 2$ with discriminant $D(f)$.  Then
	\begin{align*}
		\sep(f) > \frac{\sqrt{3|D(f)|}}{n^{(n+2)/2}M(f)^{n-1}};
	\end{align*}
 in particular, if $f(x) \in \Z[x]$, then 
    \begin{align*}
		\sep(f) > \frac{\sqrt{3}}{n^{(n+2)/2}M(f)^{n-1}}.
	\end{align*}
\end{theorem}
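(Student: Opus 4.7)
The plan is to apply Hadamard's inequality to a suitably modified Vandermonde matrix, which extracts an extra factor of $\sep(f)^2$ beyond the classical bound $|D(f)| \leq n^n M(f)^{2(n-1)}$. First I would label the roots so that $s := \sep(f) = |\alpha_1 - \alpha_2|$ and, swapping $\alpha_1$ and $\alpha_2$ if necessary, so that $M_1 := \max\{1,|\alpha_1|\} \geq M_2 := \max\{1,|\alpha_2|\}$. Let $V$ be the Vandermonde matrix with $V_{ij} = \alpha_i^{j-1}$, so that $|\det V|^2 = |D(f)|$. The key construction is to form a new matrix $B$ by replacing the first row $V_1$ with $(V_1 - V_2)/(\alpha_1 - \alpha_2)$; by multilinearity of the determinant, $\det B = \det V/(\alpha_1 - \alpha_2)$, hence $|\det B|^2 = |D(f)|/s^2$.

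Next I would estimate the Euclidean norms of the rows of $B$. The new first row has $k$-th entry $\sum_{j=0}^{k-1}\alpha_1^{k-1-j}\alpha_2^{j}$ (which vanishes for $k=0$), of absolute value at most $k\,M_1^{k-1}$ by the triangle inequality and the assumption $M_1 \geq M_2$; summing and using $M_1 \geq 1$ to extract the top power then gives $\|B_1\|^2 \leq \frac{n(n-1)(2n-1)}{6}\,M_1^{2(n-2)}$. The remaining rows are ordinary Vandermonde rows for which the routine estimate $\|V_i\|^2 \leq n\,M_i^{2(n-1)}$ suffices. Feeding these into Hadamard's inequality produces
\[
\frac{|D(f)|}{s^2} \;=\; |\det B|^2
\;\leq\; \frac{n(n-1)(2n-1)}{6}\,n^{n-1}\,\frac{M(f)^{2(n-1)}}{M_1^2},
\]
and combining $M_1 \geq 1$ with the elementary inequality $(n-1)(2n-1) < 2n^2$ rearranges this to $\sep(f) > \sqrt{3|D(f)|}/(n^{(n+2)/2}M(f)^{n-1})$. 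The integer case is immediate from $|D(f)| \geq 1$ for separable $f \in \Z[x]$.

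The delicate step is arranging for $\|B_1\|^2$ to involve only the factor $M_1^{2(n-2)}$ rather than a mixed expression like $\max\{M_1,M_2\}^{2(n-2)}$ or $(M_1M_2)^{2(n-2)}$; this is exactly what the assumption $M_1 \geq M_2$ buys, and it yields the crucial extra factor $1/M_1^2 \leq 1$. A more naive approach, bounding each $|\alpha_i - \alpha_j|$ in the expansion $|D(f)| = s^2\prod_{(i,j)\neq(1,2)}|\alpha_i-\alpha_j|^2$ by $2M_iM_j$, produces a leading constant of $2^{\binom{n}{2}-1}$, which already exceeds $n^{(n+2)/2}/\sqrt{3}$ once $n \geq 5$; the row-operation refinement is what makes the stated constant tight enough to hold uniformly in $n$.
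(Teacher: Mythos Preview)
The paper does not prove this statement at all: it is quoted as a classical result of Mahler with a citation to \cite{Mahler1964}, so there is no ``paper's own proof'' to compare against. Your argument, however, is correct and is essentially Mahler's original proof: the row operation $(V_1-V_2)/(\alpha_1-\alpha_2)$ on the Vandermonde matrix followed by Hadamard's inequality is exactly the mechanism Mahler used, and your bookkeeping (the bound $\|B_1\|^2 \leq \tfrac{n(n-1)(2n-1)}{6}M_1^{2(n-2)}$, the identity $\prod_{i\geq 2}M_i^{2(n-1)} = M(f)^{2(n-1)}/M_1^{2(n-1)}$, and the strict inequality $(n-1)(2n-1)<2n^2$) is accurate. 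The integer case is also handled correctly via $|D(f)|\geq 1$.
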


This particular result has spurred much investigation, as it leaves many questions open.  Is the exponent of $n-1$ on $M(f)$ optimal?  This remains unknown, though Bugeaud and Dujella show that the optimal exponent is at least $(2n-1)/3$ in \cite{Bugeaud2014} for $f \in \Z[x]$, while Evertse considers other ways to improve the exponent in \cite{Evertse2004}.  Is the separability hypothesis necessary?  Rump removes the separability hypothesis in \cite{Rump1979}, and Dujella and Pejkovi\'c improve his result in \cite{Dujella2017}.  Can better results be obtained with additional hypotheses (e.g. if $f(x)$ is assumed to be irreducible and/or monic)?  Many papers have looked at these kinds of questions, including \cite{Bugeaud2011,Dujella2017,Koiran2019}.  What if we look at distances between the absolute values of the roots, rather than distances between the roots themselves?  Bugeaud, Dujella, Fang, Pejkovi\'c and Salvy look at these questions in \cite{Bugeaud2022a,Bugeaud2017}.

While lower bounds on separation have been well-studied, the authors are not aware of any attempt to provide upper bounds on separation.  On the face of it, an upper bound on separation provides less intuitive information about a polynomial's roots than does a lower bound.  However, upper bounds on separation can provide us with useful information for Lehmer's conjecture, which we will describe after our main theorem.

One can easily obtain a trivial upper bound on separation using the triangle inequality: $\sep(f) = \min_{\alpha_i \neq \alpha_j} |\alpha_i - \alpha_j| \leq 2M(f).$  Combining Mahler's inequality $|D(f)| \leq n^nM(f)^{2n-2}$ \cite[Theorem 1]{Mahler1964} with the fact that $\sep(f)^{n(n-1)} \leq |D(f)|$ when $f(x)$ is separable gives the improved upper bound
\begin{align*}
	\sep(f) \leq n^{1/(n-1)}M(f)^{2/n}
\end{align*}
for separable polynomials $f(x) \in \C[x]$.  However, numerical experiments conducted by the first author at the outset of this project in \cite{Knapp2023a} suggested that this upper bound is still sub-optimal, and indeed, we can prove the following result.

\begin{theorem}\label{thm:optimalExponent}
    Let $f(x) \in \C[x]$ be monic and separable of degree $n \geq 2$.  Then \[\sep(f) \leq \min\left\{2,\frac{34}{\sqrt{n}}\right\}M(f)^{1/(n-1)}.\]  If, in addition, $f(x) \in \R[x]$ and one of the roots of $f(x)$ with minimum absolute value is not real (for example, if $f(x) \in \R[x]$ has no real roots), then \[\sep(f) \leq \min\left\{2,\frac{34}{\sqrt{n}}\right\} M(f)^{1/n}.\]
\end{theorem}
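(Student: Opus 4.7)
The plan is to bound $\sep(f)$ via the product $|f'(\alpha)|=\prod_j|\alpha-\beta_j|$, where $\alpha$ is a root of $f$ of minimum absolute value and $\beta_1,\ldots,\beta_{n-1}$ are the remaining roots, ordered so that $|\alpha-\beta_1|\le\cdots\le|\alpha-\beta_{n-1}|$. The main mechanism is a pigeonhole (disk-packing) bound applied to each prefix of this ordered list, combined with a Mahler-measure estimate for $|f'(\alpha)|$.

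For the complex case, I would invoke the area-packing fact that any $k$ points in a closed disk of radius $R$ contain a pair at distance at most $2R/(\sqrt{k}-1)$. Applied to the $j+1$ roots $\alpha,\beta_1,\ldots,\beta_j$ inside the closed disk of radius $|\alpha-\beta_j|$ centered at $\alpha$, this yields $\sep(f)\le 2|\alpha-\beta_j|/(\sqrt{j+1}-1)$ for every $j\in\{1,\ldots,n-1\}$. Taking the product of these $n-1$ inequalities gives
\[
\sep(f)^{n-1}\le \frac{2^{n-1}|f'(\alpha)|}{\prod_{k=2}^n(\sqrt{k}-1)}.
\]
Since $|\alpha|\le|\beta_j|$, the triangle inequality gives $|\alpha-\beta_j|\le 2|\beta_j|$ and hence $|f'(\alpha)|\le 2^{n-1}\prod_j|\beta_j|\le 2^{n-1}M(f)$. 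Substituting yields $\sep(f)\le 4M(f)^{1/(n-1)}/\bigl(\prod_{k=2}^n(\sqrt{k}-1)\bigr)^{1/(n-1)}$. A Stirling-type estimate $\sum_{k=2}^n\log(\sqrt{k}-1)=\tfrac12 n\log n-\tfrac12 n+O(\sqrt{n})$ shows that the denominator grows like $\sqrt{n/e}$, producing the asymptotic bound $\approx (4\sqrt{e}/\sqrt{n})M(f)^{1/(n-1)}\approx 6.6/\sqrt{n}\cdot M(f)^{1/(n-1)}$, and a finite check extends this to the advertised constant $34$. Separately, $\sep(f)\le|f'(\alpha)|^{1/(n-1)}\le 2M(f)^{1/(n-1)}$ (via min $\le$ geometric mean) supplies the constant $2$ in the regime $34/\sqrt{n}>2$.

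For the real case with non-real minimum-absolute-value root $\alpha$, the complex conjugate $\bar\alpha$ is also a root, so $\sep(f)\le|\alpha-\bar\alpha|\le 2|\alpha|$, and $|\alpha|^n\le\prod|\alpha_j|=|f(0)|\le M(f)$ immediately gives $\sep(f)\le 2M(f)^{1/n}$. To recover the $1/\sqrt{n}$ factor I would sharpen the earlier bound to $|f'(\alpha)|\le 2^{n-1}M(f)/m$, where $m=\max(1,|\alpha|)$, using $\prod_{j\ne 1}|\alpha_j|\le M(f)/m$. Plugging this into the pigeonhole product yields a second bound $\sep(f)\le (4/C_n)(M(f)/m)^{1/(n-1)}$, where $C_n=\bigl(\prod_{k=2}^n(\sqrt{k}-1)\bigr)^{1/(n-1)}$. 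Combining with $\sep(f)\le 2m$, the minimum of these two upper bounds, viewed as a function of $m$, is greatest at the crossover $m_*=(2/C_n)^{(n-1)/n}M(f)^{1/n}$, giving $\sep(f)\le 2m_*=2(2/C_n)^{(n-1)/n}M(f)^{1/n}\le (34/\sqrt{n})M(f)^{1/n}$.

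The main technical hurdle will be verifying the explicit constant $34$ uniformly in $n$; asymptotically one gets roughly $6.6/\sqrt{n}$ with comfortable slack, but a careful lower estimate of $\prod_{k=2}^n(\sqrt{k}-1)$ is required. For $n<289$ the bound $\min\{2,34/\sqrt{n}\}=2$ is furnished by the direct and geometric-mean arguments, so the pigeonhole work is only needed for $n\ge 289$. A secondary subtlety in the real case is that the crossover $m_*$ may lie below the constraint $m\ge 1$ when $M(f)<(C_n/2)^{n-1}$; in that sub-range one uses instead the bound at $m=1$, which reduces the desired inequality to $M(f)^{1/(n(n-1))}\le 17C_n/(2\sqrt{n})$ and holds with room to spare in the relevant range.
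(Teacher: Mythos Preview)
Your approach is correct and genuinely different from the paper's. Both proofs rest on the same packing fact---that $k$ points with pairwise distances at least $d$ cannot fit in a disk of radius $R$ unless $d\le 2R/(\sqrt{k}-1)$---but they exploit it differently. The paper fixes dyadic radii $R_j=r(\sqrt{n/2^j}-1)$, bounds the count $N(R_j)$ of roots inside each disk, and assembles a lower bound on $M(f)$ via a geometric-series computation over $L=\lceil\log_2 n\rceil-1$ scales; this yields $M(f)\ge (r\sqrt{n})^{n-1}/(4n^2\cdot 16^n)$ and hence the asymptotic constant $32/\sqrt{n}$. You instead order the remaining roots by their distance to the smallest root $\alpha$ and apply the packing bound to each prefix, obtaining $\sep(f)\le 2|\alpha-\beta_j|/(\sqrt{j+1}-1)$ individually and then multiplying. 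Your route is shorter, avoids the dyadic bookkeeping entirely, and produces the sharper asymptotic constant $4\sqrt{e}\approx 6.6$ rather than $32$; given this slack, pinning down $34$ for all $n\ge 289$ is routine.

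Two minor remarks. First, your crossover argument in the real case is more elaborate than necessary: since $\alpha\neq 0$ you can use $\prod_j|\beta_j|=(\prod_i|\alpha_i|)/|\alpha|\le M(f)/|\alpha|$ directly (rather than $M(f)/m$), giving $\sep(f)^{n-1}\le (4^{n-1}/C_n^{n-1})M(f)/|\alpha|$, and multiplying by $\sep(f)\le 2|\alpha|$ immediately yields $\sep(f)^n\le 2\cdot 4^{n-1}M(f)/C_n^{n-1}$---no optimization over $m$, no sub-range casework. Second, the prefix bounds for $j\le 3$ are weaker than the trivial $\sep(f)\le|\alpha-\beta_j|$; replacing those three factors would shave your constant further, though it is irrelevant for the stated $34$.
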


In both cases, the dependence of the bound on $n$ is optimal, though the constant 34 could possibly be improved.  For some particular signatures\footnote{A polynomial $f(x) \in \R[x]$ has signature $(t,s)$ if it has $t$ real roots and $s$ pairs of complex conjugate roots.} of $f(x) \in \R[x]$, we are able to find the optimal coefficient as we describe in the following theorem.

\begin{theorem}\label{thm:improvedConstant}
Suppose $f(x) \in \R[x]$ is monic and separable of degree $n \geq 2$ and signature $(t,s)$ where $s = 0$, or $(t,s) = (1,1)$, or $(t,s) = (0,2)$. Then \[\sep(f) \leq C_{t,s}M(f)^{1/(n - \delta)}\] where $\delta$ is $1$ if $t \neq 0$ and $0$ otherwise, and $C_{t,s}$ is defined by the following table:\\
	\begin{center}
		\begin{tabular}{| c || c | c | c |}\hline
			$(t,s)$ & $(1,1)$ & $(0,2)$ & $(t,0)$\\\hline
			$C_{t,s}$ & $\sqrt{3}$ & $\sqrt{2}$ & $(2e+o(1))/t$\\\hline
		\end{tabular}
	\end{center}
\end{theorem}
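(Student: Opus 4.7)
My plan is to treat each of the three signatures separately, since each has a distinct extremal configuration: the cube roots of unity for $(1,1)$ (where $x^3 - 1$ achieves $\sep = \sqrt{3}$ and $M = 1$), the non-real $8$th roots of unity for $(0,2)$ (where $x^4 + 1$ achieves $\sep = \sqrt{2}$ and $M = 1$), and, asymptotically, equi-spaced real roots on a long interval for $(t,0)$. In each case the strategy is the same: identify the extremal, then verify the bound for arbitrary $f$.

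For the small signatures $(1,1)$ and $(0,2)$ the plan is a direct parametrization. For $(1,1)$, writing the roots as $r \in \R$ and $a \pm bi$ with $b > 0$ gives the three pairwise distances $2b$ and $\rho = \sqrt{(r-a)^2 + b^2}$ (twice), so $\sep(f)^2 = \min(4b^2, \rho^2)$. I would prove $\sep(f)^2 \leq 3M(f)$ by a case analysis on whether $|r|$ and $\sqrt{a^2+b^2}$ each exceed $1$; in each case, combining the defining constraint (e.g.\ $4b^2 \leq \rho^2$ if $\sep(f) = 2b$) with the triangle inequality $|r - a| \leq |r| + |a|$ bounds $4b^2$ by $3M(f)$. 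For $(0,2)$ with roots $z_j = a_j + b_j i$ and their conjugates ($b_j > 0$), the four relevant pairwise distances are $2b_1, 2b_2, |z_1 - z_2|, |z_1 - \bar z_2|$; using $b_j \leq \max(1, |z_j|)$ together with a case analysis on which pair achieves the minimum yields $\sep(f)^4 \leq 4M(f)$.

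For the all-real case $(t,0)$ I would argue that among monic degree-$t$ polynomials with fixed $\sep(f) = \delta$, the $M$-minimizing polynomial has equi-spaced roots $\alpha_k = \alpha_0 + (k-1)\delta$ for a suitable translate $\alpha_0$. A variational step, contracting any gap strictly exceeding $\delta$ and re-translating, weakly decreases $M(f)$ by exploiting convexity of $x \mapsto \log\max(1, |x|)$ outside $[-1, 1]$; this reduces the problem to equi-spaced configurations. For such a configuration, setting $R = (t-1)\delta/2$, a Riemann-sum estimate gives
\[\log M(f) = \tfrac{2}{\delta}\bigl(R\log R - R + 1\bigr) + O(\log t),\]
whence $\log M(f)^{1/(t-1)} = \log(R/e) + O((\log t)/t)$. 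Rearranging, $\sep(f) = 2R/(t-1) \leq (2e/(t-1))\,M(f)^{1/(t-1)}(1 + o(1))$, which gives $C_{t,0} = (2e + o(1))/t$; sharpness follows by taking $R \to \infty$ (scaling up the equi-spaced configuration so all roots lie far outside the unit disk).

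The main obstacle is the variational reduction to the equi-spaced configuration in the $(t,0)$ case. Because $M$ is not translation-invariant, the minimization over gap distributions and over translations are coupled, and the piecewise definition of $\max(1, |x|)$ introduces extra casework each time a root crosses the unit circle. A secondary difficulty is controlling the error terms in the Riemann-sum approximation sharply enough to recover the precise constant $2e$ rather than a weaker constant; I expect this to reduce to standard Euler--Maclaurin estimates. By contrast, in the small cases $(1,1)$ and $(0,2)$ the obstacle is just careful bookkeeping of the several sub-cases in the position-of-roots analysis.
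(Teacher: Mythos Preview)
Your overall decomposition into the three signature cases, and your identification of the extremal configurations ($x^3-1$, $x^4+1$, equi-spaced real roots), match the paper exactly. The small cases $(1,1)$ and $(0,2)$ are handled in the paper by direct parametrization and case analysis much as you sketch, though with somewhat different reductions: for $(1,1)$ the paper first arranges WLOG that the real root $\alpha\geq 0$ and $\Re\beta\leq 0$, then caps $\alpha$ at the value making $\alpha,\beta,\bar\beta$ equilateral, before splitting on whether $|\beta|\gtrless 1$ and $\alpha\gtrless 1$; for $(0,2)$ it splits on whether $|\beta|\geq 2|\alpha|$ and then does a short geometric extremization. Your sketches for these two cases are thinner than the paper's but not obviously wrong.

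The substantive difference is in the all-real case $(t,0)$, and here your self-identified ``main obstacle'' is real and your proposed tool is mis-stated. The function $x\mapsto\log\max(1,|x|)$ is \emph{concave}, not convex, outside $[-1,1]$, so a convexity-based variational step will not go through as written; and as you note, the coupling between gap-contraction and optimal re-centering is genuinely awkward. The paper sidesteps both issues with a single clean reduction: let $\alpha$ be the root of $f$ closest to $0$, and replace every other root by the corresponding point of the arithmetic progression $\alpha\pm j\cdot\sep(f)$ (keeping the count of roots on each side of $\alpha$). Because $\alpha$ is closest to $0$, each root moves weakly toward the origin, so $M$ can only drop---this needs only monotonicity of $\max(1,|x|)$ in $|x|$, no convexity and no re-translation. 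After this reduction the paper computes $M(g)$ exactly via $\prod_{i}(i-\tfrac12)\prod_j(j+\tfrac12)=\Gamma(T+\tfrac12)\cdot 2\Gamma(S+\tfrac32)/\pi$, bounds the resulting central binomial coefficient, and applies Stirling to extract the constant $2e$. Your Riemann-sum plan would likely recover the same asymptotic constant, but the paper's route is both shorter and yields an explicit non-asymptotic bound ($6.33/n$ for $n\geq 4$) along the way.
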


It is worth noting that each of our main theorems produce the same bounds on absolute separation, defined in \cite{Bugeaud2022a} to be \[\operatorname{abs\, sep}(f) = \min_{|\alpha_i| \neq |\alpha_j|} ||\alpha_i| - |\alpha_j||,\] since we have $\operatorname{abs\, sep}(f) \leq \sep(f)$ for all $f$.

With our main theorems described, we can detail the connection to Lehmer's Conjecture.

\begin{conjecture}[Lehmer's Conjecture]
	There exists an absolute constant $\mu > 1$ so that for every irreducible, noncyclotomic $f(x) \in \Z[x]$ of degree at least 2, $M(f) \geq \mu$.
\end{conjecture}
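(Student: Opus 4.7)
Lehmer's Conjecture has remained open for nearly a century, so any honest proof proposal must be speculative. The natural plan suggested by the preceding material is to leverage Theorem~\ref{thm:optimalExponent} to force a positive lower bound on $M(f)$ by contradiction: assume $f(x) \in \Z[x]$ is irreducible and noncyclotomic of degree $n \geq 2$ with $M(f)$ arbitrarily close to $1$, and derive an impossibility from the upper bound $\sep(f) \leq 34 n^{-1/2} M(f)^{1/(n-1)}$.

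The first step would be to combine this upper bound with an integrality-based lower bound on $\sep(f)$. The most direct such bound, Theorem~\ref{thm:mahlerLowerBound}, yields $\sep(f) > \sqrt{3}/(n^{(n+2)/2} M(f)^{n-1})$, which chained with Theorem~\ref{thm:optimalExponent} gives $M(f)^{n-1+1/(n-1)} \gg n^{-(n+1)/2}$. This is too weak: the right-hand side decays rapidly with $n$ and produces no uniform positive lower bound. A successful attack therefore requires a substantially stronger lower bound on $\sep(f)$ that exploits the noncyclotomic hypothesis more aggressively than Mahler's classical inequality does.

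The second step would be to extract geometric rigidity from the assumption that $M(f) \approx 1$. In that regime most of the roots of $f$ must cluster very close to the unit circle, and Theorem~\ref{thm:optimalExponent} then forces two such roots to lie within $O(n^{-1/2})$ of one another. I would try to combine this near-collision with Galois-theoretic constraints (the Galois orbit of each root must respect the same spacing) and Gauss--Lucas-type information about $f'$ in order to extract either a cyclotomic factor of $f$ or an outright integrality contradiction.

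The main obstacle is precisely the step that converts a geometric near-collision into an algebraic identity. Lehmer's Conjecture has resisted every direct separation-based attack to date, and while Theorem~\ref{thm:optimalExponent} is a genuinely new quantitative tool, it is almost certainly not strong enough on its own. In light of recent progress on related problems, such as Dimitrov's proof of the Schinzel--Zassenhaus Conjecture, a realistic attack would likely need to be supplemented with potential-theoretic or transcendence-theoretic input going well beyond the techniques developed here.
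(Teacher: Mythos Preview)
The statement you were asked to prove is Lehmer's Conjecture, which the paper explicitly labels as a conjecture and does \emph{not} attempt to prove. There is no proof in the paper to compare your proposal against. The paper's only use of the conjecture is the observation immediately following it: combining Theorem~\ref{thm:mahlerLowerBound} and Theorem~\ref{thm:optimalExponent} shows that any counterexample to Lehmer (with $M(f)<\mu$) must have $\sep(f)$ lying in the explicit interval
\[
\frac{\sqrt{3}}{n^{(n+2)/2}\mu^{n-1}} < \sep(f) < \min\!\left\{2,\frac{34}{\sqrt{n}}\right\}\mu^{1/(n-1)}.
\]
This is a reduction of the search space, not a proof.

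Your proposal is appropriately candid about this. You correctly identify that chaining Theorem~\ref{thm:mahlerLowerBound} with Theorem~\ref{thm:optimalExponent} yields only $M(f)^{n-1+1/(n-1)}\gg n^{-(n+1)/2}$, which gives no uniform bound as $n\to\infty$; this is exactly why the paper stops at the interval above rather than claiming more. Your further speculation about Galois rigidity, clustering on the unit circle, and Dimitrov-style input is reasonable as a research outline, but none of it constitutes a proof, and you say as much. In short: there is no gap to name because you have not claimed a proof, and there is no divergence from the paper because the paper offers no proof either.
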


Observe that Theorem \ref{thm:mahlerLowerBound} provides a lower bound on $M(f)$ which is useful when $\sep(f)$ is small: \[M(f) \geq \left(\frac{\sqrt{3}}{n^{(n+2)/2}\sep(f)}\right)^{1/(n-1)}.\]  On the other hand, Theorem \ref{thm:optimalExponent} provides a lower bound on $M(f)$ that is useful when $\sep(f)$ is large: \[M(f) \geq \left(\frac{\sep(f)}{\min\{2,34/\sqrt{n}\}}\right)^{n-1}.\]

Hence, proving Lehmer's Conjecture reduces the case where $f(x)$ satisfies \[\frac{\sqrt{3}}{n^{(n+2)/2}\mu^{n-1}} < \sep(f) < \min\left\{2,\frac{34}{\sqrt{n}}\right\}\mu^{1/(n-1)}.\]

%%%%%%%%%%%%%%%%%%%%%%%%%%%%%%%%%%%%%%%%%%%%%%%%%%%%%%%%%%%%%%%%%%%%%%%
%%%%%%%%%%%% THE OPTIMAL UPPER BOUND %%%%%%%%%%%%%%%%%%%%%%%%%%%%%%%%%%
%%%%%%%%%%%%%%%%%%%%%%%%%%%%%%%%%%%%%%%%%%%%%%%%%%%%%%%%%%%%%%%%%%%%%%%

\section{The Optimal upper bound up to constant factor}

The main goal of this section is to prove Theorem \ref{thm:optimalExponent}.

\begin{proof}[Proof of Theorem~\ref{thm:optimalExponent}]
    Write $f(x) = \prod_{i=1}^n (x - \alpha_i)$ where $|\alpha_1| \leq |\alpha_2| \leq \cdots \leq |\alpha_n|$.  
    
    We first prove that $\sep(f) \leq 2M(f)^{1/(n-1)}$.  Note that for any $j \geq 2$, we have $0 < |\alpha_j - \alpha_1| \leq 2|\alpha_j|$.  As a consequence,
    \begin{align*}
        \sep(f)^{n-1} \leq |\alpha_2 - \alpha_1||\alpha_3 - \alpha_1|\cdots|\alpha_n - \alpha_1| \leq \prod_{j = 2}^n 2|\alpha_j| \leq 2^{n-1}M(f)
    \end{align*}
    and the claim follows.
    
    Next, we show that if $f(x) \in \R[x]$ and if there exists $\alpha_j \notin \R$ with $|\alpha_j| = |\alpha_1|$, then actually $\sep(f) \leq 2M(f)^{1/n}$.  The argument is very similar, except we may now assume that $\alpha_1$ and $\alpha_2$ are complex conjugates.  In this case,
    \begin{align*}
        \sep(f)^n \leq |\alpha_2 - \alpha_1|^2|\alpha_3 - \alpha_1|\cdots|\alpha_n - \alpha_1| \leq \prod_{j=1}^n 2|\alpha_j| \leq 2^nM(f)
    \end{align*}
    and the claim again follows.

    Now we return to the situation where we only assume $f(x) \in \C[x]$ is separable of degree $n$, and we aim to show that \[\sep(f) \leq \frac{34}{\sqrt{n}}\cdot M(f)^{1/(n-1)}.\]
    Since we have $\frac{34}{\sqrt{n}} \geq 2$ for $n \leq 289$, we assume that $n \geq 290$ in the following discussion.

    Let $r = \sep(f)/2$ and for any $R \geq 0$, let \[N(R) := \#\{ 1\leq i \leq n: |\alpha_i| < R\}.\]  Additionally, for any $z \in \C$, let $B_R(z)$ denote the open ball of radius $R$ centered at $z$.  We may safely assume that $r \geq \frac{1}{\sqrt{n}}$, as the theorem easily follows otherwise.

    Next, observe that for any $i \neq j$, $B_r(\alpha_i)$ and $B_r(\alpha_j)$ are disjoint.  Additionally, for any $R > 0$, \[B_{R+r}(0) \supsetneq \bigcup_{|\alpha_i| \leq R} B_r(\alpha_i).\]  As a consequence, \[\pi(R+r)^2 = \operatorname{vol}(B_{R+r}(0)) > \sum_{|\alpha_i| \leq R} \operatorname{vol}(B_r(\alpha_i)) = N(R)\cdot\pi r^2,\] implying that 
    \begin{align}\label{eq:numSmallRoots}
        N(R) < \left(\frac{R}{r} + 1\right)^2.
    \end{align}
    Now, for each positive integer $j \leq \lceil \log_2(n)\rceil - 1 =: L$ (here we use the assumption that $n \geq 3$ to obtain $L \geq 1$), define $R_j = r(\sqrt{n/2^j} - 1)$.  Note that $R_j > 0$ for $j \leq L$.  For each $j$, we aim to bound from below the number of roots $\alpha_i$ which satisfy $R_j < |\alpha_i|$.

    Observe that $N(R_j) < n/2^j$ by inequality~\eqref{eq:numSmallRoots}, and hence, for each $1 \leq j \leq L$, there must be at least $n(2^j - 1)/2^j$ roots $\alpha_i$ which satisfy $|\alpha_i| \geq R_j$.  Therefore,
    \begin{align}\label{eq:initMahlerLower2}
        M(f)    &\geq \prod_{|\alpha_i| \geq R_L} |\alpha_i|= (r\sqrt{n})^{n-N(R_L)} \prod_{|\alpha_i| \geq R_L} \frac{|\alpha_i|}{r\sqrt{n}}\nonumber\\
                &\geq (r\sqrt{n})^{n-N(R_L)}\left(\frac{R_1}{r\sqrt{n}}\right)^{n/2}\left(\frac{R_2}{r\sqrt{n}}\right)^{n/4}\cdots \left(\frac{R_L}{r\sqrt{n}}\right)^{n/2^L}.
    \end{align}
The remainder of this proof will be dedicated to showing that \[\left(\frac{R_1}{r\sqrt{n}}\right)^{n/2}\left(\frac{R_2}{r\sqrt{n}}\right)^{n/4}\cdots \left(\frac{R_L}{r\sqrt{n}}\right)^{n/2^L} \geq \frac{1}{4n^2\cdot16^n}.\]

    First, note that for any $j \leq L$, 
    \begin{equation*}
    \frac{R_j}{r\sqrt{n}} = 2^{-j/2} - n^{-1/2} = \frac{2^{-j} - n^{-1}}{2^{-j/2} + n^{-1/2}} = \frac{n - 2^j}{n\cdot2^j(2^{-j/2} + n^{-1/2})}.  
    \end{equation*}
    Now, since $j \leq L$, we have that $n^{-1/2} \leq 2^{-j/2}$, implying that $2^{-j/2} + n^{-1/2} \leq 2^{-j/2 + 1}$.  Hence, \[\frac{R_j}{r\sqrt{n}} \geq \frac{n - 2^j}{n\cdot2^{j/2 + 1}}.\]
    We now make slightly different estimates for $j = L$ and for $j < L$.  If $j = L$, we have \[\frac{R_L}{r\sqrt{n}} \geq \frac{1}{n\cdot2^{L/2 + 1}}.\]     If $j < L$, we use the fact that $2^L < n \leq 2^{L+1}$ to find that \[\frac{R_j}{r\sqrt{n}} \geq \frac{n - 2^j}{n\cdot2^{j/2 + 1}} \geq \frac{2^L - 2^j}{2^{L + j/2 + 2}} = \frac{1 - 2^{j-L}}{2^{j/2 + 2}} \geq \frac{1}{2^{j/2 + 3}}.\]
    Consequently, we can write
    \begin{align*}
        \prod_{j=1}^L \left(\frac{R_j}{r\sqrt{n}}\right)^{n/2^j} &\geq \left[\left(\frac{1}{n\cdot2^{L/2+1}}\right)^{2^{-L}}\cdot\prod_{j=1}^{L-1}2^{(-j/2 - 3)/2^j}\right]^n\\
        &= \left[\frac{1}{(2n)^{2^{-L}}} \cdot 2^{-\sum_{j=1}^L j/2^{j+1}} \cdot 8^{-\sum_{j=1}^{L-1} 2^{-j}}\right]^n\\
        & \geq \left[\frac{1}{(2n)^{2^{-L}}} \cdot 2^{-\sum_{j\geq1} j/2^{j+1}} \cdot 8^{-\sum_{j\geq1} 2^{-j}}\right]^n.
    \end{align*}
    The series in the above exponents converge, with \[\sum_{j \geq 1} 2^{-j} = 1 \quad \text{and} \quad \sum_{j \geq 1} j \cdot 2^{-(j+1)} = 1.\]  Hence, since $n \leq 2^{L+1}$, \[\prod_{j=1}^L \left(\frac{R_j}{r\sqrt{n}}\right)^{n/2^j} \geq \frac{1}{(2n)^{n/2^L}}\cdot(2^{-1}\cdot 8^{-1})^n \geq \frac{1}{4n^2\cdot16^n}.\]

    Finally, we can return to inequality~\eqref{eq:initMahlerLower2} and we acquire 
    \begin{equation}\label{eq:eq3}
    M(f) \geq \frac{1}{4n^2\cdot16^n}\cdot(r\sqrt{n})^{n - N(R_L)}.      
    \end{equation}
    Since $N(R_L) < n/2^L \leq 2$ and since $N(R_L)$ is an integer, we actually have $N(R_L) \leq 1$.  Since we have assumed that $r \geq 1/\sqrt{n}$, inequality~\eqref{eq:eq3} implies \[M(f) \geq \frac{1}{4n^2\cdot16^n}\cdot(r\sqrt{n})^{n-1}\] and we get \[\sep(f) = 2r \leq \frac{2\cdot(4n^2)^{1/(n-1)}\cdot16^{1 + 1/(n-1)}}{\sqrt{n}}\cdot M(f)^{1/(n-1)} \leq \frac{34}{\sqrt{n}} \cdot M(f)^{1/(n-1)}, \] where we use the fact that $n \geq 290$ in the final inequality.

    Note that the additional, stronger statement for $f(x) \in \R[x]$ also follows.  If $f(x)$ has a nonreal root among its roots of minimum absolute value, then we cannot have $N(R_L) = 1$, so we must have $N(R_L) = 0$. Hence, inequality~\eqref{eq:eq3} implies that\[M(f) \geq \frac{1}{4n^2}\left(\frac{r\sqrt{n}}{16}\right)^n,\] and the theorem statement again follows.
\end{proof}

We now demonstrate that both bounds are optimal except possibly the constant 34.  We do this by constructing a family of polynomials whose separation nearly attains the upper bound which we have now proven.

Before we do this, we note the following fact about the Gaussian integers.  For $r \geq 0$, let $G(r)$ denote the number of Gaussian integers $a + bi$ for $a,b \in \Z$ which satisfy $|a + bi| \leq r$.  A well-known result of Gauss (see for example \cite[page 101]{Berndt2018}) states that for $r \geq \sqrt{2}$ we have \[\pi(r - \sqrt{2})^2 \leq G(r) \leq \pi (r + \sqrt{2})^2.\] 

Fix an integer $n \geq 2$ and a real number $t \geq 1$.  Set $R = \sqrt{n/\pi} + \sqrt{2}$, which implies that there are at least $n$ Gaussian integers $a + bi$ with $|a + bi| \leq R$. Select $n$ distinct Gaussian integers $\alpha_1,\dots,\alpha_n$ so that
\begin{enumerate}
    \item $\alpha_1 = 0$.
    \item For each $1 \leq j \leq n$, we have $|\alpha_j| \leq R$.
\end{enumerate}
Now define \[f_t(x) = \prod_{j=1}^n (x - t\alpha_j).\]  For $j \geq 2$, we have $|\alpha_j| \geq 1$, implying that \[M(f_t) = \prod_{j=2}^n t|\alpha_j| \leq (tR)^{n-1} \leq (1.6\sep(f_t)\sqrt{n})^{n-1},\] implying that $\sep(f_t) \geq \frac{1}{1.6\sqrt{n}}M(f_t)^{1/(n-1)}.$  Hence, the first upper bound on separation in Theorem \ref{thm:optimalExponent} has the best possible dependence on $n$.

For the remaining examples, we take $R = \sqrt{(n+1)/\pi} + \sqrt{2}$ and we choose $n$ distinct Gaussian integers $\beta_1,\dots,\beta_n$ so that
\begin{enumerate}
    \item \label{it:sizeRestriction} For each $1 \leq j \leq n$, we have $0 < |\beta_j| \leq R$.
    \item The set $\{\beta_1,\dots,\beta_n\}$ is closed under complex conjugation.
    \item There exists $\beta_j \notin \R$ so that $|\beta_j| = \min_k|\beta_k|$.
\end{enumerate}

Observe that condition \ref{it:sizeRestriction} together with the fact that the $\beta_k$ are Gaussian integers implies that $\min_k |\beta_k| \geq 1$.  For real $t \geq 1$, we define \[g_t(x) = \prod_{j=1}^n (x - t\beta_j) \in \R[x]\] and now we have \[M(g_t) = \prod_{j=1}^n t|\beta_j| \leq (tR)^n \leq (1.7\sqrt{n}\sep(g_t))^n,\] which implies that $\sep(g_t) \geq \frac{1}{1.7\sqrt{n}}M(g_t)^{1/n}$.  Hence, the second upper bound on separation in Theorem \ref{thm:optimalExponent} has the best possible dependence on $n$.

We remark that the constant factors of 1.6 and 1.7 can be improved by replacing the square lattice of Gaussian integers with the hexagonal lattice, and with some extra work using partial summation to find a more efficient upper bound on $M(f_t)$ and $M(g_t)$.

%%%%%%%%%%%%%%%%%%%%%%%%%%%%%%%%%%%%%%%%%%%%%%%%%%%%%%%%%%%%%%%%%%%%%%%
%%%%%%%%%%%% IMPROVING THE CONSTANT FACTOR %%%%%%%%%%%%%%%%%%%%%%%%%%%%
%%%%%%%%%%%%%%%%%%%%%%%%%%%%%%%%%%%%%%%%%%%%%%%%%%%%%%%%%%%%%%%%%%%%%%%
\section{Improving the Constant Factor}

Our main goal of this section is to prove Theorem \ref{thm:improvedConstant}, which we will do in pieces.

\begin{proposition}\label{totallyRealSeparationProp}
	Let $f(x) \in \R[x]$ be monic and separable with degree $n \geq 4$.  Suppose further that all $n$ of the roots of $f$ are real.  Then \[\sep(f) \leq \frac{6.33}{n}\cdot M(f)^{1/(n-1)}.\]
 Moreover, as $n \to \infty$, we have 
 \begin{equation}\label{eq:asymp}
\sep(f) \leq \frac{2e+o(1)}{n}\cdot M(f)^{1/(n-1)},     
 \end{equation}
 and the constant $2e$ is asymptotically sharp.
\end{proposition}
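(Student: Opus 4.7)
The plan is to convert the separation hypothesis into a lower bound on $M(f)$ by exploiting the rigid geometry of totally real roots: confined to $\R$ and mutually separated by at least $s := \sep(f)$, they must spread out far from the unit disk. Order the roots as $\alpha_1 < \cdots < \alpha_n$. Since every factor of $M(f) = \prod_i \max(1,|\alpha_i|)$ is at least $1$, we have $M(f) \geq 1$, and the target bound is trivial whenever $s$ is below the claimed upper bound; so I would assume $s$ exceeds this threshold, which is of order $1/n$.

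The first step is to partition the roots into three consecutive groups: $\ell$ roots in $(-\infty,-1]$, $m$ roots in $(-1,1)$, and $r$ roots in $[1,\infty)$, with $\ell + m + r = n$. Since roots in $(-1,1)$ are pairwise separated by at least $s$, we have $m \leq \lfloor 2/s\rfloor + 1$. Ordering the left-exterior roots by increasing distance from $-1$, the $k$-th such root has absolute value at least $1 + (k-1)s$, and similarly for the right. Hence
\begin{align*}
M(f) \;\geq\; \prod_{k=0}^{\ell-1}(1+ks)\prod_{k=0}^{r-1}(1+ks) \;=:\; P(\ell,r).
\end{align*}
A swap argument (moving from $(\ell,r)$ with $\ell > r$ to $(\ell-1,r+1)$ changes $\log P$ by $\log(1+rs)-\log(1+(\ell-1)s) \leq 0$) shows that, for fixed $\ell+r$, $P$ is minimized when $\ell$ and $r$ are balanced; and $P$ is monotone in $\ell+r$. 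Together these give $P(\ell,r) \geq P(K,K)$ with $K := \lfloor (n - \lfloor 2/s\rfloor - 1)/2\rfloor$. The key algebraic observation is that this choice yields the clean identity $K + 1/s = (n-1)/2 + O(1)$.

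To estimate $P(K,K)$, I would rewrite $\prod_{k=0}^{K-1}(1+ks) = s^K\,\Gamma(1/s + K)/\Gamma(1/s)$ and apply Stirling's formula to $\log\Gamma$. Using $\log\Gamma(x) = x\log x - x + O(\log x)$ together with $1/s + K = (n-1)/2 + O(1)$, the algebra collapses to
\begin{align*}
\log P(K,K) \;\geq\; (n-1)\log\!\left(\frac{s(n-1)}{2e}\right) + \frac{2}{s} + O(\log n).
\end{align*}
Taking the $(n-1)$-th root and letting $n\to\infty$ gives $M(f)^{1/(n-1)} \geq (1+o(1))\cdot sn/(2e)$, equivalent to the asymptotic bound~\eqref{eq:asymp}. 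The explicit finite-$n$ constant $6.33/n$ for $n \geq 4$ would then follow by replacing the $O(\log x)$ term with an explicit Robbins-type error bound on $\log\Gamma$ and a direct check for small $n$; I expect this to be computational rather than conceptual work.

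For sharpness, consider the arithmetic-progression polynomial $f_K(x) := \prod_{j=1}^n\bigl(x - \tfrac{2K}{n}(j - \tfrac{n+1}{2})\bigr)$. One computes $\sep(f_K) = 2K/n$ directly, and by a Riemann-sum comparison against $\int_{1/K}^1 \log(Ku)\,du = \log K - 1 + 1/K$, one finds $M(f_K)^{1/(n-1)} = (Ke^{1/K}/e)(1+o(1))$ as $n\to\infty$, so $\sep(f_K)\cdot n/M(f_K)^{1/(n-1)} \to 2e$ upon sending $K\to\infty$ slowly with $n$. The principal structural tools in this approach are the balanced-minimization step and the clean identity $K + 1/s = (n-1)/2 + O(1)$; I expect the main obstacle to be ensuring the Stirling estimate is sharp to $o(n)$ precision so that the constant $2e$ is actually preserved, and I would not be surprised if pinning down the explicit $6.33$ required splitting into a handful of finite-$n$ subcases.
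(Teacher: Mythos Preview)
Your approach is correct in outline and genuinely different from the paper's. The paper begins with a reduction step you do not use: it replaces $f$ by a polynomial $g$ whose roots form an arithmetic progression of step $r=\sep(f)$ anchored at the root of $f$ nearest the origin (this works because pushing each root inward toward that anchor, subject to the separation constraint, can only decrease $M$). After this reduction, the root $\beta$ of $g$ closest to $0$ satisfies $|\beta|\le r/2$, and $M(g)$ factors exactly as $r^{n-1}\prod_{i=1}^T(i-\tfrac12)\prod_{j=1}^S(j+\tfrac12)$; the paper then controls this via Wendel's inequality for $\Gamma$, a sharp bound on $\binom{n}{\lfloor n/2\rfloor}$, and Robbins' explicit Stirling bound, which is what produces the numerical constant $6.33$ cleanly. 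Your route---partition by the unit interval, bound the exterior contributions by $\prod(1+ks)$, and balance $\ell$ and $r$ via the swap argument---avoids the arithmetic-progression reduction and is arguably more transparent conceptually; the identity $K+1/s=(n-1)/2+O(1)$ is exactly the mechanism that makes the constant $2e$ fall out of Stirling, just as in the paper it falls out of $\binom{n}{\lfloor n/2\rfloor}\sim 2^n/\sqrt{\pi n/2}$. The trade-off is that your path to the explicit finite-$n$ constant is less direct: the paper's reduction leaves only one free parameter ($T$, with $S=n-1-T$), whereas you must track errors from the floor in $m$, the floor in $K$, and the Stirling remainder for $\Gamma(1/s)$ when $s$ may be large; this is doable but, as you anticipate, messier. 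For sharpness the paper simply fixes $r>1$ and lets $n\to\infty$, which is slightly cleaner than your double limit in $K$ and $n$, though both work.
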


\begin{proposition}\label{cubicSeparationProp}
	Let $f(x) \in \R[x]$ be monic and separable with degree $3$.  If $f(x)$ has exactly one real root, then \[\sep(f) \leq \sqrt{3M(f)}.\]  Moreover, the constant $\sqrt{3}$ is optimal.
\end{proposition}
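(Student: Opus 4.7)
The plan is to write the roots of $f$ as $\alpha \in \R$ and $\beta = p + qi, \bar\beta$ with $q > 0$; then $\sep(f) = \min\{|\alpha - \beta|, 2q\}$ and $M(f) = \max(1, |\alpha|)\max(1, |\beta|)^2$. By replacing $f(x)$ with $-f(-x)$ if needed, I may assume $\alpha \geq 0$; write $a = \alpha$ and $b = |\beta|$.

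My first step is the geometric lemma that any three points in the closed unit disk have pairwise separation at most $\sqrt{3}$, with equality realized by an equilateral triangle inscribed in the unit circle. This follows from a standard packing argument. I then split into two cases according to whether $b \geq a$ or $a > b$. If $b \geq a$, I rescale $f$ by $1/b$ so that all roots of the rescaled polynomial lie in the closed unit disk; the lemma yields $\sep(f) \leq \sqrt{3}\,b$. When $b \geq 1$ this gives $\sep(f)^2 \leq 3b^2 \leq 3M(f)$ since $M(f) \geq b^2$; when $b < 1$ we have $M(f) = 1$ and $\sep(f) < \sqrt{3}$ already suffices.

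The harder case is $a > b$. Here the naive bound $\sep(f) \leq \sqrt{3}\,a$ coming from rescaling by $1/a$ is insufficient: after that rescaling, one root lies at $\pm 1$ on the unit circle but the conjugate pair has modulus $c = b/a < 1$. I would refine the unit-disk lemma by maximizing $\min\{|1 - \tilde\beta|, 2\operatorname{Im}(\tilde\beta)\}$ over $|\tilde\beta| = c$; the extremum occurs where both distances coincide, yielding $\sep(\tilde f)^2 \leq (1 + 2c^2 + \sqrt{12c^2 - 3})/2$ when $c \geq 1/2$, and $\sep(\tilde f) \leq 2c$ when $c < 1/2$. Rescaling back and comparing with $M(f)$, the desired inequality $\sep(f)^2 \leq 3M(f)$ reduces, after squaring and clearing, to the polynomial inequality
\[
    t^4 - 3t^3 + 7t^2 - 6t + 1 \geq 0 \qquad \text{for } t = a/b \geq 1.
\]
This factors as $(t - 1)(t^3 - 2t^2 + 5t - 1)$; the cubic factor has derivative $3t^2 - 4t + 5$ with negative discriminant and takes value $3 > 0$ at $t = 1$, so it is strictly positive for $t \geq 1$, establishing the inequality. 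The remaining sub-case $a > 2b$ (where the refined formula's domain is exited) is routine using $\sep(f) \leq 2q \leq 2b$ together with $a > 2b$.

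For sharpness, the polynomial $f(x) = x^3 + 1$ has roots $-1$ and $e^{\pm i\pi/3}$, which form an equilateral triangle inscribed in the unit circle; here $\sep(f) = \sqrt{3}$ and $M(f) = 1$, so $\sep(f) = \sqrt{3M(f)}$, showing the constant $\sqrt{3}$ cannot be improved. The main obstacle is the refinement in the $a > b$ case: the naive scaling loses a factor precisely when the real root dominates, so one must derive the sharp maximum-separation formula and check the resulting polynomial inequality; the clean factorization is what makes the verification tractable.
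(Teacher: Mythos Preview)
Your strategy differs from the paper's: you reduce to a packing lemma in the disk when $b \geq a$ and to a one-variable optimisation on the circle $|\tilde\beta| = c := b/a$ when $a > b$, whereas the paper first reflects $\beta$ to have $\Re\beta \leq 0$, reduces $\alpha$ to lie between $0$ and the value making $\alpha,\beta,\bar\beta$ equilateral, and then performs a four-way case split on whether $|\beta|$ and $\alpha$ exceed $1$. Your route is tidier when it works, collapsing the heart of the matter to the single quartic inequality $t^4 - 3t^3 + 7t^2 - 6t + 1 \geq 0$, whose factorisation you verify correctly.

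There is, however, a concrete error in the optimisation step. You assert that for all $c \geq 1/2$ the maximum of $\min\{|1 - \tilde\beta|,\, 2\operatorname{Im}\tilde\beta\}$ over $|\tilde\beta| = c$ is attained where the two quantities coincide, yielding $\sep(\tilde f)^2 \leq (1 + 2c^2 + \sqrt{12c^2 - 3})/2$. This is correct only for $c \geq 1/\sqrt{3}$. For $1/2 \leq c < 1/\sqrt{3}$ the maximum is at $\tilde\beta = ic$, where $2\operatorname{Im}\tilde\beta = 2c$ while $|1 - ic| = \sqrt{1+c^2} > 2c$; the resulting value $4c^2$ strictly exceeds your formula (indeed $8c^2 - (1 + 2c^2 + \sqrt{12c^2-3})$ has the sign of $(3c^2-1)^2$). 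Concretely, at $c = 1/2$ your bound gives $3/4$, but $\tilde\beta = i/2$ already achieves $\min = 1$. The fix is simple: move the split from $a > 2b$ to $a > \sqrt{3}\,b$, and handle the range $\sqrt{3}\,b < a \leq 2b$ by the same ``routine'' argument via $\sep(f) \leq 2q \leq 2b$ (for instance if $b \geq 1$ then $4b^2 \leq 3ab^2 = 3M(f)$ since $a > \sqrt{3} > 4/3$; the sub-cases $b < 1 \leq a$ and $a < 1$ are equally direct). You should also make explicit that the reduction to the quartic in $t$ comes from the extremal choice $b = 1$ (equivalently $a = t$), which simultaneously realises the worst case for both $M(f) = ab^2$ and $M(f) = a$; at present this step is asserted rather than justified.
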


\begin{proposition}\label{quarticSig02Prop}
	Let $f(x) \in \R[x]$ be monic and separable with degree $4$. If $f(x)$ has no real roots, then \[\sep(f) \leq \sqrt{2} \cdot M(f)^{1/4}.\]  Moreover, the constant $\sqrt{2}$ is optimal.
\end{proposition}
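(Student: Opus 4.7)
The plan is to reduce the statement $\sep(f) \leq \sqrt{2}\, M(f)^{1/4}$ to the cleaner inequality $\sep(f)^4 \leq 4|f(0)|$, since $|f(0)| \leq M(f)$ holds for every monic polynomial: indeed $M(f) = \prod_i \max(1, |\alpha_i|) \geq \prod_i |\alpha_i| = |f(0)|$.

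Since $f$ has no real roots, its roots form two conjugate pairs $\alpha_1 = a+bi$, $\overline{\alpha_1} = a-bi$, $\alpha_3 = c+di$, $\overline{\alpha_3} = c-di$ with $b,d>0$. Let $A = a^2+b^2$, $C = c^2+d^2$, and $P = |\alpha_1-\alpha_3|$, so $P^2 = (a-c)^2 + (b-d)^2$ and $|f(0)| = AC$. From the definition of separation, $\sep(f) \leq 2b$, $\sep(f) \leq 2d$, and $\sep(f) \leq P$; combining these bounds gives $\sep(f)^4 \leq 4m^2 P^2$, where $m = \min(b,d)$.

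The heart of the argument is thus the inequality $m^2 P^2 \leq AC$. Assuming WLOG that $b \leq d$ (so $m = b$), the plan is to verify the algebraic identity
\[AC - b^2 P^2 \;=\; (ac + b^2)^2 \;+\; (d-b)\bigl(a^2(d+b) + 2b^3\bigr),\]
whose right-hand side is manifestly non-negative when $d \geq b$; the opposite case $d < b$ is handled by the symmetric identity obtained by interchanging $(a,b)$ and $(c,d)$. Either way, $\sep(f)^4 \leq 4m^2 P^2 \leq 4AC \leq 4M(f)$, which is the required bound. I expect the main obstacle to be discovering this identity: a direct expansion of $AC - b^2P^2$ yields terms of mixed sign, and one must find the right grouping (in particular, isolating the square $(ac+b^2)^2$) to make the non-negativity transparent.

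For sharpness of the constant $\sqrt{2}$, I would exhibit the one-parameter family $f(x) = x^4 + 4c^4$ for $c \geq 1/\sqrt{2}$, whose roots $\pm c \pm ci$ form a square of side $2c$ centred at the origin. Then $\sep(f) = 2c$ and $M(f) = (c\sqrt{2})^4 = 4c^4$, so $\sep(f) = 2c = \sqrt{2}\cdot M(f)^{1/4}$, confirming that the constant cannot be improved.
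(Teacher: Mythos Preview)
Your proof is correct and takes a genuinely different route from the paper's. The paper argues geometrically: writing $r=|\alpha|\le R=|\beta|$, it splits into the cases $R\ge 2r$ and $r\le R<2r$, and in the second case bounds $|\alpha-\beta|$ by locating $\alpha$ and $\beta$ on arcs of circles and reducing to a one-variable calculus problem in the ratio $R/r$. Your argument is purely algebraic: you prove the stronger inequality $\sep(f)^4\le 4|f(0)|$ via $\sep(f)^4\le 4m^2P^2\le 4AC$, with the key step being the identity
\[
AC-b^2P^2=(ac+b^2)^2+(d-b)\bigl(a^2(d+b)+2b^3\bigr),
\]
which I checked expands correctly and is visibly nonnegative when $d\ge b$. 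This avoids all case analysis beyond the symmetric choice of which imaginary part is smaller, and it yields a sharper conclusion (bounding by $|f(0)|$ rather than $M(f)$). The paper's approach, by contrast, makes the geometry of the extremal configuration more transparent. Your sharpness example $f(x)=x^4+4c^4$ with roots $\pm c\pm ci$ is exactly the paper's family $f_t$ with $t=c$.
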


These propositions cover each of the cases stated in Theorem \ref{thm:improvedConstant} except the case where the signature is $(2,0)$ or $(3,0)$, in which case the result follows from Theorem \ref{thm:optimalExponent}.

We first prove a lemma on bounding binomial coefficients.

\begin{lemma}\label{lem:binom}
    For any positive integer $n \geq 3$, \[\binom{n}{\lfloor n/2\rfloor} \leq \frac{2^{n+1}}{\sqrt{\pi(2n+1)}}.\]
\end{lemma}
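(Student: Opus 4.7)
The plan is to reduce to the even case $n = 2m$ and then deduce the odd case via Pascal's identity. For even $n = 2m$, define $b_m := \binom{2m}{m}^2(4m+1)/16^m$; the desired inequality is equivalent to $b_m < 4/\pi$. Using $\binom{2m+2}{m+1} = \frac{2(2m+1)}{m+1}\binom{2m}{m}$, a short algebraic computation yields
\[\frac{b_{m+1}}{b_m} = \frac{(2m+1)^2(4m+5)}{4(m+1)^2(4m+1)} = 1 + \frac{1}{4(m+1)^2(4m+1)} > 1,\]
so $(b_m)$ is strictly increasing. Stirling's asymptotic $\binom{2m}{m} \sim 4^m/\sqrt{\pi m}$ (equivalently, Wallis's product) gives $\lim_{m \to \infty} b_m = 4/\pi$. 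Since $(b_m)$ is strictly increasing with limit $4/\pi$, we conclude $b_m < 4/\pi$ for every $m \geq 0$, which settles the even case and in particular handles $n = 4, 6, 8, \dots$.

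For odd $n = 2m+1$, combine the symmetry $\binom{2m+1}{m} = \binom{2m+1}{m+1}$ with Pascal's rule $\binom{2m+1}{m} + \binom{2m+1}{m+1} = \binom{2m+2}{m+1}$ to get $\binom{2m+1}{m} = \frac{1}{2}\binom{2m+2}{m+1}$. Applying the even case at index $m+1$ gives $\binom{2m+1}{m} \leq \frac{2^{2m+2}}{\sqrt{\pi(4m+5)}}$, which is even smaller than the target $\frac{2^{n+1}}{\sqrt{\pi(2n+1)}} = \frac{2^{2m+2}}{\sqrt{\pi(4m+3)}}$ since $4m+5 > 4m+3$. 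This covers $n = 3, 5, 7, \dots$, so both parities are handled for all $n \geq 3$.

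The main subtlety is that a direct induction on the inequality $b_m < 4/\pi$ cannot close: the multiplicative slack $1 + 1/[4(m+1)^2(4m+1)]$ is strictly greater than $1$, so one only gets $b_{m+1} < (4/\pi)(1 + \varepsilon_m)$, which is useless. Packaging the argument instead as monotone convergence from below to an exactly-known limit is what makes it work. Beyond this, everything reduces to the algebraic identity $(2m+1)^2(4m+5) - 4(m+1)^2(4m+1) = 1$ and to the standard Stirling/Wallis asymptotic for the central binomial coefficient; no further estimates are needed.
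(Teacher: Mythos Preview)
Your proof is correct and fully self-contained. The paper takes a different route: for even $n=2\ell$ it simply quotes the known bound $\binom{2\ell}{\ell}\le 2^{2\ell}/\sqrt{\pi(\ell+1/4)}$ from the literature (which is exactly your statement $b_\ell<4/\pi$ in disguise), whereas you supply an ab initio proof of that bound via the Wallis-type monotone-convergence argument, using the clean identity $(2m+1)^2(4m+5)-4(m+1)^2(4m+1)=1$. For odd $n=2m+1$ the paper goes \emph{down}, writing $\binom{2m+1}{m}=\tfrac{2m+1}{m+1}\binom{2m}{m}$ and then verifying an auxiliary inequality $\tfrac{n}{(n+1)\sqrt{2n-1}}\le \tfrac{1}{\sqrt{2n+1}}$; you go \emph{up} via Pascal to $\binom{2m+2}{m+1}$, which avoids that extra check since $4m+5>4m+3$ is immediate. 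Net effect: the paper's argument is shorter by outsourcing the even case to a citation, while yours is more transparent and requires no external input; your odd-case reduction is marginally slicker.
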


\begin{proof}
    It is known that for any positive integer $\ell$, \[\binom{2\ell}{\ell} \leq \frac{2^{2\ell}}{\sqrt{\pi (\ell+1/4)}},\] see for example \cite{MO}. Now, if $n$ is even, we have \[\binom{n}{\lfloor n/2\rfloor} \leq \frac{2^{n}}{\sqrt{\pi(n/2 + 1/4)}} = \frac{2^{n+1}}{\sqrt{\pi(2n + 1)}}.\] If $n$ is odd, say $n = 2k + 1$, then
    \begin{align*}
        \binom{n}{\lfloor n/2\rfloor}   = \binom{2k + 1}{k}
                                        = \frac{2k + 1}{k + 1} \binom{2k}{k}
                                        \leq \frac{2k + 1}{k + 1}\cdot\frac{2^{2k}}{\sqrt{\pi(k+1/4)}}
                                        = \frac{n}{n + 1}\cdot\frac{2^{n+1}}{\sqrt{\pi (2n-1)}}
    \end{align*}
    and one can now check that $\frac{n}{(n+1)\sqrt{2n-1}} \leq \frac{1}{\sqrt{2n + 1}}$ to complete the proof.
\end{proof}

Next we prove Proposition~\ref{totallyRealSeparationProp}.

\begin{proof}[Proof of Proposition~\ref{totallyRealSeparationProp}]
Denote the closest root of $f(x)$ to 0 by $\alpha$ and let $r = \sep(f)$.  Suppose that there are $s$ roots of $f(x)$ which are less than $\alpha$ and $t$ roots of $f(x)$ which are greater than $\alpha$.  Define \[g(x) = \prod_{i = 1}^t (x - \alpha - ri) \cdot (x - \alpha) \cdot \prod_{j = 1}^s (x - \alpha + rj)\] and observe that we have $M(f) \geq M(g)$ because the roots of $g$ are no further from the origin than the corresponding roots of $f$.  Furthermore, we have $\sep(f) = r = \sep(g)$, so it suffices to prove the proposition for $g(x)$.

We first consider the case that all roots of $g$ have the same sign. In this case, we can assume $\alpha\geq 0$ without loss of generality. Then the roots of $g$ are simply given by $\alpha, \alpha+r, \cdots, \alpha+(n-1)r$. Thus, $$M(g)\geq \prod_{j=1}^{n-1} (\alpha+jr) \geq \prod_{j=1}^{n-1} (jr)=(n-1)!r^{n-1}.$$ It follows that $$\sep(g)=r\leq  \left(\frac{M(g)}{(n-1)!}\right)^{1/(n-1)}<\frac{e}{n} \cdot M(g)^{1/(n-1)}.$$

Next we consider the case where not all roots of $g$ have the same sign. Let $\beta$ be the closest root of $g(x)$ to the origin and write \[g(x) = \prod_{i = 1}^T (x - \beta - ri) \cdot (x - \beta) \cdot \prod_{j = 1}^S (x - \beta + rj).\]  Since $\beta$ is the closest root of $g(x)$ to 0 and since $g(x)$ has a root with the opposite sign of $\beta$, we find that $|\beta| \leq r/2$.  We may also assume without loss of generality that $\beta \leq 0$ (else, we may apply the same proof to $g(-x)$).  We now have
	\begin{align}
		M(g)	&\geq \prod_{i = 1}^{T} |\beta + ri| \cdot \prod_{j = 1}^{S} |\beta - rj| \geq \prod_{i = 1}^{T} \left(ri - \frac{r}{2}\right) \cdot \prod_{j = 1}^{S} \left(rj + \frac{r}{2}\right)= r^{n-1} \cdot \prod_{i = 1}^{T} \left(i - \frac{1}{2}\right) \cdot \prod_{j = 1}^{S} \left(j + \frac{1}{2}\right).\label{eq:mOfGLowerBound}
	\end{align}

	Getting a handle on the lower bound given in inequality~\eqref{eq:mOfGLowerBound} will require some use of the gamma function.  We use the fact that $\Gamma\left(\frac{1}{2}\right) = \sqrt{\pi}$ together with the usual fact that $\Gamma(z) = (z - 1)\Gamma(z-1)$ to note that \[\prod_{i = 1}^T \left(i - \frac{1}{2}\right) = \frac{\Gamma\left(T + \frac{1}{2}\right)}{\sqrt{\pi}}\qquad \text{and}\qquad \prod_{j = 1}^S \left(j + \frac{1}{2}\right) = \frac{2\Gamma\left(S + \frac{3}{2}\right)}{\sqrt{\pi}}.\] 
By Wendel's inequality \cite{W48}, if $m$ is a nonnegative integer, we have
 $$
 \Gamma\bigg(m+\frac{1}{2}\bigg)> \frac{m\Gamma(m)}{\sqrt{m+\frac{1}{2}}}=\frac{m!}{\sqrt{m+\frac{1}{2}}}.
 $$
Thus, 
	\begin{align*}
		\prod_{i = 1}^{T} \left(i - \frac{1}{2}\right) = \frac{\Gamma\left(T + \frac{1}{2}\right)}{\sqrt{\pi}} > \frac{T!}{\sqrt{\pi(T+\frac{1}{2})}}, \quad \quad		\prod_{j = 1}^{S} \left(j + \frac{1}{2}\right) = \frac{2\Gamma\left(S + \frac{3}{2}\right)}{\sqrt{\pi}} > \frac{2(S +1)!}{\sqrt{\pi(S + \frac{3}{2})}}.
	\end{align*}

	Now we have 
	\begin{align}\label{eq:mOfGLowerFactorial}
		M(g) \geq r^{n-1} \cdot \frac{T!}{\sqrt{\pi(T+\frac{1}{2})}}\cdot \frac{2(S +1)!}{\sqrt{\pi(S + \frac{3}{2})}}
	\end{align} 
	from inequality \eqref{eq:mOfGLowerBound} and we aim to estimate the right-hand side of this inequality from below in terms of $n$.  We have the restrictions $0 \leq S,T \leq n$ and $S + T + 1 = n$, so we can replace $S + 1$ in equation \eqref{eq:mOfGLowerFactorial} by $n - T$ to find 
	\begin{align}
		\frac{T!}{\sqrt{\pi(T+\frac{1}{2})}}\cdot \frac{2(S +1)!}{\sqrt{\pi(S + \frac{3}{2})}}
		&= \frac{T!}{\sqrt{\pi(T+\frac{1}{2})}}\cdot \frac{2(n-T )!}{\sqrt{\pi(n-T + \frac{1}{2})}} \notag\\
		&= \frac{2n!}{\pi\binom{n}{T}\sqrt{(T + \frac{1}{2})(n - T + \frac{1}{2})}} \geq \frac{4n!}{\pi\binom{n}{\lfloor n/2 \rfloor} (n+1)} \label{eq:binomial}
	\end{align}

	Now, using Lemma~\ref{lem:binom} and Robbins' bound $n!>\sqrt{2\pi n} (n/e)^n e^{1/(12n+1)}$ \cite{R55} gives 
	\begin{align*}
	\frac{T!}{\sqrt{\pi(T+\frac{1}{2})}}\cdot \frac{2(S +1)!}{\sqrt{\pi(S + \frac{3}{2})}} 
		&\geq \frac{4n!}{\pi\binom{n}{\lfloor n/2 \rfloor} (n+1)}
  \geq \frac{n!\cdot \sqrt{2n+1}}{\sqrt{\pi}2^{n-1} (n+1)}\\
		&> \frac{\sqrt{2n}(n/e)^n e^{\frac{1}{12n + 1}}\sqrt{2n + 1}}{2^{n-1}(n + 1)} 
	=\left(\frac{n}{2e}\right)^n \cdot \frac{2\sqrt{2n(2n+1)}e^{\frac{1}{12n + 1}}}{n+1} 
		> 3.46\left(\frac{n}{2e}\right)^n
	\end{align*}
	by the fact that $n \geq 4$. 

	Finally, we return to inequality~\eqref{eq:mOfGLowerFactorial} to find that that 
	\begin{align*}
		M(g) &\geq 3.46\sep(g)^{n-1} \left(\frac{n}{2e}\right)^n,
	\end{align*}
	and we can conclude that \[\sep(g) \leq \left(\frac{M(g)}{3.46}\right)^{\frac{1}{n-1}} \left(\frac{2e}{n}\right)^{\frac{n}{n-1}} \leq \frac{2e(2e/3.46)^{1/3}}{n^{\frac{n}{n-1}}} M(g)^{\frac{1}{n-1}} \leq \frac{6.33}{n} M(g)^{\frac{1}{n-1}}.\]

 Asymptotically, using Stirling's approximation $n! \sim \sqrt{2\pi n} (n/e)^n$, we can combine inequality~\eqref{eq:mOfGLowerFactorial} and inequality~\eqref{eq:binomial} to deduce that
 $$
 M(g) \geq \sep(g)^{n-1} \frac{4n!}{\pi\binom{n}{\lfloor n/2 \rfloor} (n+1)} \geq (4-o(1))\sep(g)^{n-1} \left(\frac{n}{2e}\right)^n,
 $$
 as $n \to \infty$. It follows that $\sep(g)\leq \frac{2e+o(1)}{n}  M(g)^{\frac{1}{n-1}}$ as $n \to \infty$. 
 
Finally, we show that the constant $2e$ is asymptotically sharp in inequality~\eqref{eq:asymp}. Let $r>1$ be fixed. For each integer $n \geq 4$, set $f_n(x)=\prod_{j=-m}^{m} (x-jr)$ if $n=2m+1$ is odd, and $f_n(x)=\prod_{j=-m}^{m+1} (x-jr)$ if $n=2m+2$ is even. Stirling's approximation then yields $\sep(f_n)=\frac{2e+o(1)}{n}  M(f_n)^{\frac{1}{n-1}}$, as $n \to \infty$. 
\end{proof}

Next, we consider cubic polynomials which have only one real root.

\begin{proof}[Proof of Proposition \ref{cubicSeparationProp}.] 
   Suppose that the real root of $f(X)$ is $\alpha$ and without loss of generality, we may assume that $\alpha \geq 0$.  Let $\beta$ denote the complex root of $f(X)$ with positive imaginary part.  Write $\beta = x + iy$ for $x,y \in \R$.

	We claim that we may assume that $x \leq 0$.  If not, then set $\beta' = -x + iy$ and $g(X) = (X - \alpha)(X - \beta')(X - \overline{\beta'})$.  Since $\sep(g) \geq \sep(f)$ and $M(g) = M(f)$, proving the proposition for $g(X)$ will prove it for $f(X)$.  Hence, we only need to prove the proposition under the assumption that $x \leq 0$.

	Let $R = |\beta|$.  Note that if $y \leq \frac{\sqrt{3}R}{2}$, then \[\sep(f) \leq |\beta - \overline{\beta}| = 2y \leq \sqrt{3}R \leq \sqrt{3}M(f)^{1/2}\] and we are done.  Hence, for the rest of the proof, assume that $y > \frac{\sqrt{3}R}{2}$.  In particular, this implies that $y > -x\sqrt{3}$.

    Our next goal is to reduce to the case where $\alpha$ is ``small.''  Let $t$ be the unique value in $\R$ for which the points $t,\beta,\overline{\beta}$ form an equilateral triangle.  One can check that $t = \sqrt{3}y + x$.  Note that $t > -x\sqrt{3} + x = -x(\sqrt{3}-1) \geq 0$.  Now, if $\alpha \geq t$, set $h(X) = (X - t)(X - \beta)(X - \overline{\beta})$; we have $\sep(f) = 2y = \sep(h)$ and $M(f) \geq M(h)$, so it suffices to prove the proposition for $h$.  Hence, we may assume that $0 \leq \alpha < t$. 
    
    Note that $M(f)=\max \{1, \alpha\} \cdot \max \{1, x^2+y^2\}$ and $\sep(f)=|\alpha-\beta|=\sqrt{(\alpha-x)^2+y^2}$. Our goal is to show $M(f)/\sep(f)^2 \geq \frac{1}{3}$. We divide our proof into two cases:

    \noindent\underline{Case 1:} $x^2 + y^2 \geq 1$.

    Within this case, we have two subcases.  First, assume that $\alpha < 1$.  Then we have \[\frac{M(f)}{\sep(f)^2} = \frac{x^2 + y^2}{(\alpha - x)^2 + y^2} \geq \frac{x^2 + y^2}{(1 - x)^2 + y^2}.\]  We aim to show that \[\frac{x^2 + y^2}{(1 - x)^2 + y^2} \geq \frac{1}{3}\] or, equivalently, that $2(x^2 + y^2) \geq 1 - 2x.$  If $-1/2 \leq x \leq 0$, then this follows immediately from the fact that $x^2 + y^2 \geq 1$.  If $x < -1/2$, then we can use the fact that $y > -x\sqrt{3}$ to deduce that $2(x^2 + y^2) \geq 8x^2 > 1 - 2x.$ % Hence, \[\frac{x^2 + y^2}{(1 - x)^2 + y^2} \geq \frac{1}{3},\] which implies that $\sep(f) \leq \sqrt{3M(f)}$.

    Next, assume that $1 \leq \alpha \leq t$.  Here, we have \[\frac{M(f)}{\sep(f)^2} = \frac{\alpha(x^2 + y^2)}{(\alpha - x)^2 + y^2} = \frac{x^2 + y^2}{\alpha - 2x + \frac{x^2 + y^2}{\alpha}}.\]  Observe that the denominator, as a function of $\alpha$, will be maximized at either $\alpha = 1$ or $\alpha = t$.  In the case of $\alpha = 1$, we have already shown that \[\frac{x^2 + y^2}{(1 - x)^2 + y^2} \geq \frac{1}{3}.\]  So it remains to check that \[\frac{t(x^2 + y^2)}{(t-x)^2 + y^2} \geq \frac{1}{3}.\]  First, observe that since $t = x + y\sqrt{3}$, we have \[\frac{t(x^2 + y^2)}{(t-x)^2 + y^2} = \frac{t((t-y\sqrt{3})^2 + y^2)}{4y^2} = \frac{t^3 - 2\sqrt{3}t^2y+4ty^2}{4y^2}.\]  Next, observe that $(t^3 - 2\sqrt{3}t^2y+4ty^2)/(4y^2)$ is a nondecreasing function of $t$ since its partial derivative with respect to $t$ is $3(t - 2y/\sqrt{3})^2/(4y^2)$.  Note additionally that since $y \geq -x\sqrt{3}$, we have \[t = \sqrt{3}y + x \geq \left(\sqrt{3} - \frac{1}{\sqrt{3}}\right)y = \frac{2}{\sqrt{3}}y.\]  Since $(t^3 - 2\sqrt{3}t^2y+4ty^2)/(4y^2)$ is nondecreasing in $t$ and since $t \geq \frac{2}{\sqrt{3}}y$, we must have \[\frac{t^3 - 2\sqrt{3}t^2y+4ty^2}{4y^2} \geq \frac{2}{3\sqrt{3}}y.\]  But the fact that $x^2 + y^2 \geq 1$ together with the assumption that $y \geq -x\sqrt{3}$ implies that $y \geq \sqrt{3}/2$, and hence, we can finally conclude that \[\frac{M(f)}{\sep(f)^2} = \frac{t(x^2 + y^2)}{(t-x)^2 + y^2} = \frac{t^3 - 2\sqrt{3}t^2y+4ty^2}{4y^2} \geq \frac{2}{3\sqrt{3}}y \geq \frac{1}{3}.\]

    \noindent\underline{Case 2:} $x^2 + y^2 < 1$.

    Note now that since $x^2 + y^2 < 1$ and $y \geq -x\sqrt{3}$, we must have $x \geq -1/2$.

    Again, we have two subcases.  First, if $\alpha< 1$, then \[\frac{M(f)}{\sep(f)^2} = \frac{1}{(\alpha - x)^2 + y^2} \geq \frac{1}{(1 - x)^2 + y^2} = \frac{1}{1 - 2x + x^2 + y^2} \geq \frac{1}{2 - 2x} \geq \frac{1}{3}\] and we are done.

    Now assume that $1 \leq \alpha \leq t$.  The fact that $t \geq 1$ implies that $y = (t-x)/\sqrt{3} \geq 1/\sqrt{3}$.  In this case, we have \[\frac{M(f)}{\sep(f)^2} = \frac{\alpha}{(\alpha - x)^2 + y^2} = \frac{1}{\alpha - 2x + \frac{x^2 + y^2}{\alpha}},\] which is again minimized when $\alpha = 1$ or $\alpha = t$.  We have already shown that \[\frac{1}{(1 - x)^2 + y^2} \geq \frac{1}{3},\] so it  remains to show that \[ \frac{t}{4y^2}= \frac{t}{(t - x)^2 + y^2} \geq \frac{1}{3}.\]  
    If $y \leq \frac{\sqrt{3}}{2}$, then we have
    $\frac{t}{4y^2}\geq \frac{t}{3} \geq \frac{1}{3}$.  If $y \geq \sqrt{3}/2$, then we use the fact that $x \geq -\sqrt{1-y^2}$ to get
  \[\frac{t}{4y^2} = \frac{x + y\sqrt{3}}{4y^2} \geq \frac{y\sqrt{3}-\sqrt{1-y^2}}{4y^2}\geq \frac{1}{3}.\]

We can see that the constant $\sqrt{3}$ is optimal due to the polynomial $f(x) = x^3 - 1$.

\end{proof}

Finally, we turn our attention to the case where $f(x)$ is quartic with no real roots.

\begin{proof}[Proof of Proposition \ref{quarticSig02Prop}]
	Suppose that the roots of $f(x)$ are $\alpha,\bar{\alpha},\beta,\bar{\beta}$ where $\Im[\alpha],\Im[\beta] > 0$ and $|\alpha| = r \leq |\beta| = R$.  We first note that $\sep(f) = \min\{2\Im[\alpha],2\Im[\beta],|\alpha - \beta|\}).$ We have the following two cases:

	\noindent\underline{Case 1:} $2 r \leq R$

	In this case, we note that \[\sep(f) \leq 2\Im[\alpha] \leq 2r\leq 2r^{1/2}\left(\frac{R}{2}\right)^{1/2} = \sqrt{2}r^{1/2}R^{1/2} \leq \sqrt{2}M(f)^{1/4}.\]

	\noindent\underline{Case 2:} $r \leq R < 2r$

	In this case, we first observe that if $\Im[\alpha] < \frac{\sqrt{2}}{2}r^{1/2}R^{1/2}$ or if $\Im[\beta] < \frac{\sqrt{2}}{2}r^{1/2}R^{1/2}$, then we are done because \[\sep(f) \leq \min\{2\Im[\alpha],2\Im[\beta]\} \leq \sqrt{2}r^{1/2}R^{1/2} \leq \sqrt{2}M(f)^{1/4}.\]
	Hence, \[\alpha \in \left\{z \in \C : |z| = r \text{ and } \Im[z] \geq \frac{\sqrt{2}}{2}r^{1/2}R^{1/2}\right\}=:S\] and \[\beta \in \left\{z \in \C : |z| = R \text{ and } \Im[z] \geq \frac{\sqrt{2}}{2}r^{1/2}R^{1/2}\right\}=:T.\]

	As a result, we have 
	\begin{align*}
		\sep(f) 	&\leq |\alpha - \beta|
					\leq \sup_{\substack{z_1 \in S\\z_2 \in T}} |z_1 - z_2|\\
					&= \left|\left(\sqrt{r^2 - \frac{rR}{2}} + i\cdot \frac{\sqrt{2}}{2}r^{1/2}R^{1/2}\right) -  \left(-\sqrt{R^2 - \frac{rR}{2}} + i\cdot \frac{\sqrt{2}}{2}r^{1/2}R^{1/2}\right)\right|\\
					&= \sqrt{r^2 - \frac{rR}{2}} + \sqrt{R^2 - \frac{rR}{2}}.
	\end{align*} 
	We claim that \[\sqrt{r^2 - \frac{rR}{2}} + \sqrt{R^2 - \frac{rR}{2}} \leq \sqrt{2}r^{1/2}R^{1/2}.\]  To see this, divide both sides of the inequality by $r^{1/2}R^{1/2}$ to obtain the equivalent inequality \[\sqrt{\frac{r}{R} - \frac{1}{2}} + \sqrt{\frac{R}{r} - \frac{1}{2}} \leq \sqrt{2}.\]  Observe that this new inequality only depends on the ratio $x = \frac{R}{r}$, which we have bounded by $1 \leq x < 2.$  It is now a simple calculus problem to show that \[\sqrt{\frac{1}{x} - \frac{1}{2}} + \sqrt{x - \frac{1}{2}} \leq \sqrt{2}\] for all $1 \leq x < 2$ and the proof that $\sep(f) \leq \sqrt{2}M(f)^{1/4}$ is complete.

	To see that the bound is sharp, consider the family of polynomials \[f_t(x) = (x - t(1+i))(x - t(1-i))(x - t(-1+i))(x - t(-1-i))\] for $t \in \R$ with $t \geq 1/\sqrt{2}$.  Every polynomial in this family has $\sep(f_t) = \sqrt{2}M(f_t)^{1/4}.$
\end{proof}

%%%%%%%%%%%%%%%%%%%%%%%%%%%%%%%%%%%%%%%%%%%%%%%%%%%%%%%%%%%%%%%%%%%%%%%
%%%%%%%%%%%% ACKNOWLEDGMENTS %%%%%%%%%%%%%%%%%%%%%%%%%%%%%%%%%%%%%%%%%%
%%%%%%%%%%%%%%%%%%%%%%%%%%%%%%%%%%%%%%%%%%%%%%%%%%%%%%%%%%%%%%%%%%%%%%%
\section*{Acknowledgements}

The first author would like to thank Chris Sinclair and Joe Webster for their helpful suggestions regarding the development of this project. The second author thanks Gabriel Currier and Kenneth Moore for helpful discussions. Both authors would like to thank Andrej Dujella and Tomislav Pejkovi\'c for their comments on an early draft of this paper. The authors are also grateful to anonymous referees for their valuable comments and corrections. 

The first author would also like to gratefully acknowledge the financial support of a PIMS postdoctoral fellowship, along with NSERC grants RGPIN-2019-04844, RGPIN-2022-03559, and RGPIN-2018-03770. The research of the second author was supported in part by an NSERC fellowship.

\bibliographystyle{abbrv}

\bibliography{library}

\end{document}